\newtheorem{Theorem} {Theorem} [section]
\newtheorem{Proposition} [Theorem] {Proposition}
\newtheorem{Lemma} [Theorem] {Lemma}
\newtheorem{Corollary} [Theorem] {Corollary}
\newcommand{\cC}{{\mathcal C}}
\newcommand{\cG}{{\mathcal G}}
\newcommand{\cK}{{\mathcal K}}
\newcommand{\cL}{{\mathcal L}}
\newcommand{\cP}{{\mathcal P}}
\newcommand{\cT}{{\mathcal T}}
\newcommand{\II}{{\text{I}}}
\newcommand{\AG}{\text{AG}}
\newcommand{\PG}{\text{PG}}
\newcommand{\<}{\langle}
\renewcommand{\>}{\rangle} % was: tabbing command
\renewcommand{\phi}{\varphi} % was: phi
\title{Non-Geometric Cospectral Mates of Line Graphs with a Linear Representation}
\author{
Ferdinand Ihringer
}
\date{23 Apr 2022}
\begin{document}
\maketitle

\begin{abstract}
  For an incidence geometry $\cG = (\cP, \cL, \II)$ 
  with a linear representation $\cT_n^*(\cK)$,
  we apply WQH switching to construct a non-geometric 
  graph $\Gamma'$ cospectral with the line graph $\Gamma$ of $\cG$.
  
  As an application, we show that for $h \geq 2$ and $0 < m < h$,
  there are strongly regular graphs with parameters
  $(v, k, \lambda, \mu) = (2^{2h}  (2^{m+h}+2^m-2^h),
    2^h  (2^h+1)(2^m-1),
    2^h  (2^{m+1}-3), 
    2^h  (2^m-1))$
    which are not point graphs of partial geometries
    of order $(s,t,\alpha) = ((2^h+1)(2^m-1), 2^h-1, 2^m-1)$.
\end{abstract}

\section{Introduction}

Let $\cG = (\cP, \cL, \II)$ be a partial linear space of order $(s, t)$,
that is two points are incident with at most one line, each point is
incident with $t+1$ lines, and each line is incident with $s+1$ points.
If for any anti-flag $(P, L)$, there
are precisely $\alpha$ lines through $P$ meeting $L$, then $\cG$ is called a
{\it partial geometry} with parameters $(s, t, \alpha)$.
The line graph $\Gamma(\cL)$ of a partial linear space $\cG$ has vertex set $\cL$,
two lines adjacent when they meet.

Linear representations are an important source for partial linear spaces:
Let $n \geq 2$ and $q$ be a prime power.
Let $\cP$ be the points of $\AG(n+1, q)$ and $\cK$ be a set of points 
in the hyperplane $H \cong \PG(n, q)$ at infinity. 
Let $\cL$ be the lines of $\AG(n+1, q)$ which meet $H$ in a point of $\cK$.
Then $\cT_n^*(\cK)$ denotes the incidence geometry $\cG = (\cP, \cL, \II)$
where incidence is inherited from $\AG(n+1, q)$.
We call $\cT_n^*(\cK)$ the {\it linear representation} of $\cG$.
The line graph $\Gamma(\cL)$ has $|\cK| \cdot q^{n}$ vertices and 
degree $q \cdot (|\cK|-1)$.
We refer to \cite{DeClerck2003,DeWinter2005,Thas1974}
for various constructions of interesting geometries using linear 
representation.

If $q=2^h$, $0 < m < h$, and $n=2$, then a maximal arc $\cK$
of Denniston type of size $(2^h+1)(2^m-1)+1$ (see \cite{Denniston1969}) yields
a partial geometry with linear representation $\cT_2^*(\cK)$
and parameters $(s,t,\alpha) = (2^h-1, (2^h+1)(2^m-1), 2^m-1)$.
In particular, for $m=1$ we obtain generalized quadrangles
of order $(s,t) = (q-1, q+1)$.

Recall WQH-switching \cite{WQH2019} (also see \cite{IM2019}):

\begin{Lemma}[WQH-Switching]\label{lem:WQH}
    Let $\Gamma$ be a graph with vertex set $X$ and
    let $\{ C_1, C_2, D \}$ be a partition of $X$,
    where the subgraphs induced on $C_1$, $C_2$,
    and $C_1 \cup C_2$ are regular, and $C_1$ and $C_2$
    have the same size and degree. Suppose that
    $x \in D$ either has the same number of neighbors
    in $C_1$ and $C_2$, or satisfies 
    $\Gamma(x) \cap (C_1 \cup C_2) \in \{ C_1, C_2 \}$.
    Construct a new graph $\Gamma'$ by interchanging 
    adjacency and nonadjacency between $x \in D$ and $C_1 \cup C_2$
    when $\Gamma(x) \cap (C_1 \cup C_2) \in \{ C_1, C_2\}$.
    Then $\Gamma$ and $\Gamma'$ are cospectral.
\end{Lemma}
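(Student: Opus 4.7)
The natural strategy is to exhibit an orthogonal involution $\tilde Q$ on $X$ with $\tilde Q A \tilde Q = A'$; this is stronger than cospectrality. Order the vertices as $C_1, C_2, D$ and take $\tilde Q$ equal to the identity on the $D$-block and to some $2n \times 2n$ involution $Q$ on the $C_1 \cup C_2$-block, where $n = |C_1| = |C_2|$. The candidate I would use is the Householder reflection
\[ Q = I - 2 v v^{\top}, \qquad v = \tfrac{1}{\sqrt{2n}}(\mathbf{1}_{C_1} - \mathbf{1}_{C_2}). \]
This $Q$ is symmetric and orthogonal, it swaps $\mathbf{1}_{C_1}$ and $\mathbf{1}_{C_2}$, and it fixes pointwise every vector whose restrictions to $C_1$ and to $C_2$ have the same coordinate sum.

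In block form relative to the partition $(C_1 \cup C_2, D)$, one has
\[ A = \begin{pmatrix} B & N^{\top} \\ N & M \end{pmatrix}, \qquad A' = \begin{pmatrix} B & (N')^{\top} \\ N' & M \end{pmatrix}, \]
so the identity $\tilde Q A \tilde Q = A'$ reduces to two block identities: $Q B Q = B$ and $Q N^{\top} = (N')^{\top}$. The first is equivalent to $v$ being an eigenvector of $B$, and this is the one place the regularity hypotheses enter: regularity of the subgraphs induced on $C_1$, on $C_2$, and on $C_1 \cup C_2$, combined with $|C_1|=|C_2|$ and equal $C_i$-degrees, forces $B \mathbf{1}_{C_1} = d\, \mathbf{1}_{C_1} + (d'-d)\, \mathbf{1}_{C_2}$ and symmetrically for $\mathbf{1}_{C_2}$, so $Bv = (2d - d')v$ and hence $QBQ = B$.

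For the second identity, consider the column $w_x$ of $N^{\top}$ indexed by $x \in D$. If $x$ is a balanced vertex, then $v^{\top} w_x = \tfrac{1}{\sqrt{2n}}\bigl(|\Gamma(x) \cap C_1| - |\Gamma(x) \cap C_2|\bigr) = 0$, so $Q w_x = w_x$; if $x$ is a swap vertex with $\Gamma(x) \cap (C_1 \cup C_2) = C_1$, then $w_x = \mathbf{1}_{C_1}$ and hence $Q w_x = \mathbf{1}_{C_2}$, which is precisely the switching rule (the other swap case being symmetric). I expect the only real hurdle to be spotting the correct $Q$: once one demands simultaneously that $Q$ be an involution swapping $\mathbf{1}_{C_1}$ and $\mathbf{1}_{C_2}$, that it fix every balanced vector, and that it preserve $B$, the vector $v \propto \mathbf{1}_{C_1} - \mathbf{1}_{C_2}$ is essentially forced, and the rest of the argument is a short linear-algebra check.
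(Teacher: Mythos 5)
Your proof is correct and complete: $v$ is indeed a $B$-eigenvector under the stated regularity hypotheses, so the symmetric $B$ commutes with $vv^{\top}$ and hence with $Q$, and the action of $Q$ on the columns of $N^{\top}$ reproduces the switching rule exactly. The paper states Lemma \ref{lem:WQH} without proof, citing \cite{WQH2019}; your argument is precisely the standard one from that source, since your Householder reflection $I-2vv^{\top}$ with $v\propto\mathbf{1}_{C_1}-\mathbf{1}_{C_2}$ is exactly the usual switching block $\bigl(\begin{smallmatrix} I-\frac{1}{n}J & \frac{1}{n}J\\ \frac{1}{n}J & I-\frac{1}{n}J\end{smallmatrix}\bigr)$ written in a different form.
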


Only for this document, let us call a partial linear space $\cG = (\cP, \cL, \II)$ 
of order $(s, t)$ an {\it incomplete $(s, t, \alpha)$-geometry} 
when for any anti-flag $(P, L)$ of $\cG$,
$P$ is collinear with at most $\alpha$ points on $L$.
We show the following:

\begin{Proposition}\label{prop:switching}
    Let $\cG = (\cP, \cL, \II)$ be an incomplete $(q-1, t, \alpha)$-geometry with 
    linear representation $\cT^*_n(\cK)$ 
    and line graph $\Gamma = \Gamma(\cL)$ (so $t+1 = |\cK|$).
    Suppose that $t > q(\alpha-1)$, and that 
    $\cK$ contains a line $K$ with $|K \cap \cK| \geq 2$
    and points $Q_1, Q_2 \in \cK \setminus K$ with $\< Q_1, Q_2 \> \cap K \notin \cK$.
    Then there exists a graph $\Gamma'$ cospectral with $\Gamma$ 
    such that $\Gamma'$ is not the line graph of 
    an incomplete $(s', t, \alpha')$-geometry for any $s', \alpha'$.
\end{Proposition}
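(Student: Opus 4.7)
The plan is to apply Lemma~\ref{lem:WQH} to a partition built from two parallel affine planes of $\AG(n+1,q)$ sharing $K$ as their line at infinity, and then to derive a contradiction from the assumption that $\Gamma'$ is a line graph of an incomplete $(s',t,\alpha')$-geometry via a clique-intersection argument seeded at an unswitched portion of a $\cG$-pencil.

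\emph{Setting up the switching.} I would fix two points $R_1, R_2 \in K \cap \cK$ and two distinct parallel affine planes $\sigma \neq \sigma'$ of $\AG(n+1,q)$, each with line at infinity $K$. Let $C_1$ be the $q$ lines of $\cT_n^*(\cK)$ inside $\sigma$ of direction $R_1$, and $C_2$ the $q$ lines inside $\sigma'$ of direction $R_2$. Both are cocliques (parallel classes), and $\sigma \cap \sigma' = \emptyset$ forbids any edge between them, so $\Gamma[C_1 \cup C_2]$ is trivially regular. For any $x \notin C_1 \cup C_2$, a short case analysis on the direction of $x$ gives: directions outside $K$ meet each of $\sigma, \sigma'$ in exactly one affine point, producing one neighbour in each $C_i$; directions in $K$ with $x \not\subset \sigma \cup \sigma'$ miss both planes, producing zero neighbours in each; and $x \subset \sigma$ of direction in $(K \cap \cK) \setminus \{R_1\}$ meets every line of $C_1$ but no line of $C_2$, giving $\Gamma(x) \cap (C_1 \cup C_2) = C_1$ (symmetrically for $\sigma'$). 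So Lemma~\ref{lem:WQH} applies; as $|K \cap \cK| \geq 2$, the $R_2$-direction lines of $\sigma$ are genuinely switchable, so $\Gamma' \neq \Gamma$.

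\emph{Non-geometricity.} Assume for contradiction that $\Gamma'$ is the line graph of an incomplete $(q-1,t,\alpha')$-geometry $\cG'$ (the parameter $s'=q-1$ is forced by the common vertex degree $qt$). Pick a switched $R_2$-line $x \subset \sigma$, some $\ell \in C_1$, and set $P := x \cap \ell \in \sigma$. Let $N$ be the set of the $|\cK \setminus K| = t+1-|K \cap \cK|$ lines of $\cG$ through $P$ whose direction does not lie in $K$. These lines lie in $D$, they are unswitched (one neighbour in each $C_i$), and they form a clique in $\Gamma$. Because WQH preserves edges inside $D$ and edges between unswitched $D$-vertices and $C_1 \cup C_2$, both $N \cup \{\ell\}$ and $N \cup \{x\}$ remain cliques of $\Gamma'$, yet $\ell$ and $x$ are non-adjacent in $\Gamma'$ (their edge was flipped). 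The hypothesis $Q_1, Q_2 \in \cK \setminus K$ gives two distinguished members $m_1, m_2 \in N$ (the lines through $P$ of directions $Q_1, Q_2$), so $|N| \geq 2$. Using $t > q(\alpha'-1)$, together with a bound $\alpha' \leq \alpha$ coming from a local neighbourhood comparison, I would argue that every clique of $\Gamma'$ of size $|N|+1$ extends uniquely to a pencil of $\cG'$; then $N \cup \{\ell\}$ and $N \cup \{x\}$ lie in two distinct pencils of $\cG'$ (distinct because $\ell \not\sim x$) sharing the common set $N$, contradicting the partial-linear-space axiom that two distinct pencils share at most one line. The condition $\<Q_1, Q_2\> \cap K \notin \cK$ is used to exclude an alternate $\cT_n^*$-line in direction $R_0 = \<Q_1, Q_2\> \cap K$ inside the plane $\<P, Q_1, Q_2\>$, which could otherwise restore the $x$-$\ell$ adjacency in the line-graph interpretation and sabotage the contradiction.

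\emph{Main obstacle.} The technical heart is the clique-rigidity assertion: showing that in a line graph of an incomplete $(q-1,t,\alpha')$-geometry under $t > q(\alpha'-1)$, any clique of size $t+2-|K \cap \cK|$ must extend uniquely to a $(t+1)$-pencil of $\cG'$, and that the relevant $\alpha'$ is controlled by $\alpha$. Once these are in place the contradiction above closes immediately; without them, the two cliques $N \cup \{\ell\}$ and $N \cup \{x\}$ could in principle sit in non-pencil maximal cliques sharing more than one line, and the argument would collapse.
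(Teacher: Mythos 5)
Your switching setup is fine and essentially the same as the paper's (two disjoint planes through $K$, a parallel class in each), and the objects you single out --- the point $P$, the unswitched lines $m_1,m_2$ through $P$ with directions $Q_1,Q_2$, and the flipped edge between $\ell\in C_1$ and a switched line $x$ --- are the right ones. The gap is in the non-geometricity step, and it is exactly the one you flag yourself: the clique-rigidity assertion cannot be established, and the inequality it needs is in general false. You want every clique of $\Gamma'$ of size $|N|+1=t+2-|K\cap\cK|$ to extend to a pencil of $\cG'$. The only available bound on non-pencil cliques in the line graph of an incomplete $(q-1,t,\alpha')$-geometry is $q(\alpha'-1)+1$, so you need $t+2-|K\cap\cK|>q(\alpha'-1)+1$. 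But the proposition quantifies over \emph{all} $\alpha'$ (any partial linear space of order $(q-1,t)$ is an incomplete $(q-1,t,q)$-geometry, so $\alpha'$ may be as large as $q$), and no ``local neighbourhood comparison'' can force $\alpha'\le\alpha$: $\alpha'$ is merely an upper bound over anti-flags of the unknown $\cG'$, not a quantity readable off $\Gamma'$. Even granting $\alpha'=\alpha$, the hypothesis $t>q(\alpha-1)$ only gives $t\ge q(\alpha-1)+1$, so $|N|+1=q(\alpha-1)+3-|K\cap\cK|\le q(\alpha-1)+1$ as soon as $|K\cap\cK|\ge 2$ and $t=q(\alpha-1)+1$ --- a regime the proposition allows. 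With the trivial bound $\alpha'\le q$ the inequality fails even in the Denniston application, since $t+2-2^m=2^{h+m}-2^h+1<q^2-q+1$ for $m<h$. So $N\cup\{\ell\}$ and $N\cup\{x\}$ need not sit inside pencils of $\cG'$, and your contradiction never materializes. A further symptom is that the hypothesis $\langle Q_1,Q_2\rangle\cap K\notin\cK$ is never concretely used in your argument; the purpose you ascribe to it (``restoring the $x$--$\ell$ adjacency'') does not correspond to any step.

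The missing idea is to apply the clique bound to $\Gamma$, where $\alpha$ is known, rather than to $\Gamma'$, where $\alpha'$ is not. From $\cG'$ one should use only the replication number: every edge of the line graph of a partial linear space with $t+1$ lines per point lies in a $(t+1)$-clique. It then suffices to exhibit one edge of $\Gamma'$ lying in no $(t+1)$-clique, and your $m_1,m_2$ serve: their neighbourhoods are untouched by the switching, so their common $\Gamma'$-neighbours are common $\Gamma$-neighbours, and since $t+1>q(\alpha-1)+1$ the only $(t+1)$-clique of $\Gamma$ on this edge is the pencil at $P$. A $(t+1)$-clique of $\Gamma'$ on this edge must therefore contain a line $L$ through $P$ lying in $\sigma$; but any such $L$ has lost a pencil-mate at $P$ (a $C_1$-line becomes nonadjacent to the switched lines, and vice versa) and gains only neighbours contained in the other plane $\sigma'$, none of which is a common neighbour of $m_1,m_2$ --- this last point is precisely where $\langle Q_1,Q_2\rangle\cap K\notin\cK$ enters. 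Hence any such clique has size at most $t$, which is the desired contradiction.
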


A graph (not edgeless, not complete) of order $v$ and degree $k$ is called strongly regular
with parameters $(v, k, \lambda, \mu)$ if any two adjacent vertices
have precisely $\lambda$ common neighbors,
and any two nonadjacent vertices have precisely $\mu$ common neighbors.
A partial geometry of order $(s, t, \alpha)$ yields a strongly regular
graph with parameters
\[
 (v, k, \lambda, \mu) = ( \tfrac{(s+1)(st+\alpha)}{\alpha}, s(t+1), s-1+t(\alpha-1), \alpha(t+1)).
\]
By applying Proposition \ref{prop:switching} to the 
partial geometries from arcs $\cK$ of Denniston type with parameters 
$(s,t,\alpha) = (2^h-1, (2^h+1)(2^m-1), 2^m-1)$ mentioned 
above, we obtain the following:

\begin{Corollary}\label{cor:srg_appl}
    For $h \geq 2$, and $0 < m < h$, 
    there exists a strongly regular graph with parameters 
    $(v, k, \lambda, \mu) = (2^{2h}  (2^{m+h}+2^m-2^h),
    2^h  (2^h+1)(2^m-1),
    2^h  (2^{m+1}-3), 
    2^h  (2^m-1))$
    which is not the line graph of a partial geometry
    of order $(s,t,\alpha) = (2^h-1, (2^h+1)(2^m-1), 2^m-1)$.
\end{Corollary}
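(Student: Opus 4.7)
The strategy is to apply Proposition~\ref{prop:switching} to the partial geometry $\cG$ with linear representation $\cT_2^*(\cK)$, where $\cK \subseteq \PG(2, 2^h)$ is a Denniston maximal arc of degree $2^m$ and size $(2^h+1)(2^m-1)+1$, so that $\cG$ has parameters $(s,t,\alpha) = (2^h-1, (2^h+1)(2^m-1), 2^m-1)$. Feeding these values into the SRG formula from the introduction (applied to the dual partial geometry, whose point graph coincides with the line graph $\Gamma$ of $\cG$) shows routinely that the parameters of $\Gamma$ are exactly those claimed in the corollary. Since a connected regular graph is strongly regular iff its adjacency spectrum has three distinct eigenvalues, any graph cospectral with $\Gamma$ inherits this structure, so the switched graph $\Gamma'$ produced by Proposition~\ref{prop:switching} will automatically be strongly regular with the desired parameters.

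To apply Proposition~\ref{prop:switching}, I need to verify its hypotheses with $q = 2^h$, $n = 2$, $\alpha = 2^m-1$, $t = (2^h+1)(2^m-1)$. The inequality $t > q(\alpha-1)$ reduces to $2^h + 2^m - 1 > 0$, and any secant line $K$ to $\cK$ in $\PG(2, 2^h)$ trivially satisfies $|K \cap \cK| = 2^m \geq 2$. The substantive condition—exhibiting $Q_1, Q_2 \in \cK \setminus K$ with $\<Q_1, Q_2\> \cap K \notin \cK$—is the main obstacle. Because $m < h$, the line $K$ contains $2^h + 1 - 2^m \geq 1$ points outside $\cK$; fix one such $P$. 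Partitioning $\cK$ into the $2^m$-element fibres cut out on $\cK$ by the secants through $P$ shows that the number of secants to $\cK$ through $P$ equals $|\cK|/2^m = 2^h + 1 - 2^{h-m}$, which is at least $2$ since $m < h$ and $h \geq 2$. Hence some secant $\ell \neq K$ passes through $P$, and any two of the $2^m$ points of $\ell \cap \cK$ then lie in $\cK \setminus K$ and satisfy $\<Q_1, Q_2\> \cap K = \{P\} \not\subseteq \cK$.

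To conclude, every partial geometry of order $(s, t, \alpha)$ is by definition an incomplete $(s, t, \alpha)$-geometry: the anti-flag axiom makes the ``at most $\alpha$ collinear points on $L$'' condition hold with equality. Proposition~\ref{prop:switching} therefore rules out that $\Gamma'$ is the line graph of a partial geometry of order $(2^h-1, (2^h+1)(2^m-1), 2^m-1)$, finishing the proof.
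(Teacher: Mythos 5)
Your proof is correct and follows essentially the same route as the paper: apply Proposition~\ref{prop:switching} to the Denniston-arc partial geometry, checking $t > q(\alpha-1)$ and the existence of $K, Q_1, Q_2$ from the fact that every line meets the arc in $0$ or $2^m$ points. You simply supply more detail than the paper's two-line proof (the explicit parameter computation via the dual geometry, the counting of secants through a point of $K\setminus\cK$ to produce $Q_1,Q_2$, and the observation that cospectrality preserves strong regularity), all of which is accurate.
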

\begin{proof}
    From $q=2^h$, $t=(2^h+1)(2^m-1)$, 
    and $\alpha=2^m-1$, the inequality $t > q(\alpha-1)$ follows.
    A line of $\PG(2, q)$ intersects a complete arc of size 
    $(2^h+1)(2^m-1)$ either in $0$ or $2^m$ points, so 
    $K$, $Q_1$, $Q_2$ exist.
\end{proof}

If $m=1$, then we have the line graph of a generalized quadrangle of order $(q-1, q+1)$
with $q=2^h$.
It is not too hard to see that a construction by Wallis \cite{Wallis1971}
produces graphs cospectral
with the line graph of $\cG$ which are not line graphs themselves, 
so Corollary \ref{cor:srg_appl} is surely known for $m=1$.
This note is motivated by \cite{vDG2022} where the authors ask
if there exists an infinite family of non-geometric 
strongly regular graphs cospectral with the line graph of 
a generalized quadrangle of order $(q-1, q+1)$.
More generally, Wallis' construction
with (in Wallis notation) an affine resolvable design of type $\text{AR}(2^h, 1)$
and a block design with $(v,k)=(2^{h+m}+2^m-2^h, 2^m)$ works.
Again, Denniston arcs imply the existence of these structures
(for instance, see \cite{GTW2019}).

When Proposition \ref{prop:switching} is applicable,
then one can most likely apply WQH-switching repeatedly and obtain 
large numbers of graphs. For instance, starting with the line
graph of the unique generalized quadrangle of order $(3,5)$,
so $(h,m) = (2,1)$ in Corollary \ref{cor:srg_appl},
one obtains 133,005 strongly regular graphs
by applying WQH-switching up to six times \cite{Ihringer2020}.

\section{Proof of Proposition \ref{prop:switching}}

Let $M_1, M_2$ be distinct planes of $\AG(n+1, q)$ through $K$.
Pick $P \in \cK \cap K$.
For $i \in \{ 1,2\}$, let $C_i$ denote the lines in 
$M_i \cap \cL$ which contain $P$.

Let us verify that we can apply Lemma \ref{lem:WQH}:
Clearly, $|C_1| = |C_2|$.
Let $L$ be a line of $C_i$ for $\{ i, j \} = \{ 1, 2\}$.
The line $L$ is adjacent to all lines in $C_i$
and no line in $C_j$.
Hence, the induced subgraphs on $C_1$, $C_2$, 
and $C_1 \cup C_2$ are all regular, and the 
induced subgraphs on $C_1$ and $C_2$ have the same orders and degrees.

Now let $L$ be a line of $\cL \setminus (C_1 \cup C_2)$.
If $L \subseteq M_i$ for some $i \in \{1 ,2 \}$, then 
$L$ meets all lines of $C_i$ and none of $C_j$ for $\{ i, j \} = \{ 1, 2\}$.
In all other cases $L$ meets $M_1$  in a point $R_1$ and $M_2$ in a point $R_2$.
Hence, $L$ meets one line of $C_1$ and $C_2$ each.
Hence, we can apply Lemma \ref{lem:WQH}
and obtain a graph $\Gamma'$ cospectral with $\Gamma$.

The discussion above shows that the neighborhood 
of $L \in \cL \setminus ( C_1 \cup C_2 )$
only differs between $\Gamma$ and $\Gamma'$
when $L \subseteq M_i$ and $P \notin L$.
Such lines exist as we require $|L \cap \cK| \geq 2$.

It remains to show that the resulting graph $\Gamma'$ cannot be
the line graph of an incomplete $(s', t, \alpha')$-geometry.
Observe that cliques of $\Gamma$
either have size $t+1$
(when they consist of all lines through point) or size at most $q(\alpha-1)+1$
(when they are contained in a plane of $\AG(n+1, q)$).
Suppose that $\Gamma'$ is the line graph 
of an incomplete $(s', t, \alpha')$-geometry $\cG'$.
Hence, if two lines are adjacent in 
$\Gamma'$, then they lie together in a clique of size $t+1$.

Pick a point $R$ in $M_1$.
For $i \in \{ 1, 2\}$, let $L_i$ be the line through $Q_i$ and $R$.
Note that $L_1$ and $L_2$ are not in $M_1 \cup M_2$, 
so their neighborhoods are the same in $\Gamma$ and $\Gamma'$.
Hence, $L_1, L_2$ are adjacent in $\Gamma$ and $\Gamma'$,
so they lie in a clique of size $t+1$ in $\Gamma$ and $\Gamma'$ each.
For $\Gamma$, this clique is unique (as $t+1 > q(\alpha-1)+1$) and 
consists of all lines through $R$.

The lines $L_1$ and $L_2$ have no common neighbor in $M_2$:
If $L_1$ or $L_2$ does not meet $M_2$, then this is clear.
Hence, suppose that $L_1 \cap M_2$ and $L_2 \cap M_2$ are 
distinct points $S_1$ and $S_2$.
Let $\tilde{L}$ be the line through $S_1$ and $S_2$.
Then $\tilde{L} \cap K = \< S_1, S_2\> \cap K = \< Q_1, Q_2 \> \cap K \notin \cK$.
Hence, $\tilde{L} \notin \cL$.

Now we show that in $\Gamma'$ a clique containing 
$L_1$ and $L_2$ has at most size $t$.

If $L_1, L_2$ lie in a clique $\cC$ of $\Gamma'$
which does not contain a line through $R$, then $|\cC| \leq q(\alpha-1)+1 < t+1$.
Hence, $L_1, L_2$ lie in a clique of size $t+1$ which also contains 
a line $L \in \cL \cap M_1$ with $R \in L$.

If $L \in C_1$,
then let $L'$ be a line of $\cL \setminus C_1$ in $M_1$ with $R \in L'$ 
(which exists as $|L \cap \cK| \geq 2$).
Then $L'$ is nonadjacent to $L$ in $\Gamma'$.
The line $L$ only gains lines in $M_2$ as new neighbors in $\Gamma'$,
but $L_1$ and $L_2$ have no common neighbor in $M_2$ in $\Gamma'$.
Hence, $\{ L, L_1, L_2 \}$ lie in a clique of size at most $t$.

If $L \notin C_1$, then repeat the argument with 
switched roles for $L$ and $L'$, that is $L' \in \cL$
with $R \in L'$.
Hence, $L_1$ and $L_2$ do not lie in a clique of size $t+1$,
so $\Gamma'$ is not the line graph of an incomplete $(s', t, \alpha')$-geometry.

\bigskip
\paragraph*{Acknowledgment}
The first author is supported by a 
postdoctoral fellowship of the Research Foundation -- Flanders (FWO).

\end{document}